\newtheorem{theorem}{Theorem}[section]
\newtheorem{corollary}{Corollary}[section]
\newtheorem{lemma}{Lemma}[section]
\newtheorem{proposition}{Proposition}[section]
\newtheorem*{keyword}{Keywords}
\newtheorem*{remark}{Remark}
\newtheorem*{definition}{Definition}
\begin{document}
\title{Explicit Formula for the $n$-th Derivative of a Quotient}
\author{Roudy El Haddad \\
Universit\'e La Sagesse, Facult\'e de g\'enie, Polytech \\
\href{roudy1581999@live.com}{roudy1581999@live.com}}
\date{}
\maketitle

\begin{abstract}
Leibniz's rule for the $n$-th derivative of a product is a very well known and extremely useful formula. 
In this article, we introduce an analogous explicit formula for the $n$-th derivative of a quotient of two functions. Later, we use this formula to derive new partition identities and to develop expressions for some special $n$-th derivatives. 
\end{abstract}

\begin{keyword}
{\em $n$-th derivative of a quotient, generalized quotient rule, partitions. } \\
\thanks{\bf{MSC 2020:}{ primary 26A24 secondary 28A15, 11P99} }
\end{keyword}

\section{Introduction} \label{intro}
If we chose two functions $u$ and $v$ and went around asking mathematicians to compute the $n$-th derivative of their product, the first idea that would come to their mind is to use Leibniz's formula. However, what if instead we asked them to compute the $n$-th derivative of the quotient, what formula would come to their mind? For many, the answer is none. A large portion of the mathematical community is unaware of the existence of such a formula.  
This is because, while Leibniz's formula is a subject that is studied in practically all calculus courses, when it comes to talk about an analogous formula for the quotient of two functions, the topic is rarely discussed. Although many wonder if such a formula exists, not much work has been done on the subject. In 1967 \cite{OneOverf(x)}, the first step was taken as a simpler question was answered, that is, a recursive formula for the $n$-th derivative of $1/f(x)$ was presented. Later, in 1980, F. Gerrish \cite{uselessFormula} noticed an interesting pattern linking the $n$-th derivative of a quotient to a special determinant. In 2008, this special connection was used to establish a recursive formula for such a derivative \cite{quotientFormula}. 
So if such a formula exists, why do most of us not know about it? There are two major reasons: The first is that the existing formulas are not very practical as they are recursive rather than explicit. The second is that such a formula was thought to be useless. 
F. Gerrish \cite{uselessFormula} even went as far as calling it the ``Useless Formula''. However, since then, this formula has found a variety of applications and has been used to deal with a multitude of topics \cite{furrer, barabesi2020, liu2014, mahmudov2017, rafeiro2010, cao2017, basunew}. Hence, in this article, we propose to revisit the subject and develop an explicit formula for the $n$-th derivative of a quotient analogous to the generalized product rule, in hopes this formula will become a standard like Leibniz's rule.
More precisely, it will be analogous to the generalized product rule for the product of several functions. 
Note that we mean by analogous that the formula will be explicit and that it will have the same form (that is, it will be in terms of a sum over partitions).  
Let us begin by noting that, in the same way Leibniz's formula is often referred to as the product rule, in this article, for simplicity, we will refer to the formula for the $n$-th derivative of a quotient as the quotient rule. 
We will begin by deriving a new formula for the $n$-th derivative of $1/f(x)$ (Section \ref{deriv One Over f(x)}). Although, such a formula already exists, the formula presented in \cite{OneOverf(x)} is rather complicated. We propose a simpler formula involving partitions. The formula we will present also has the advantage of being explicit rather than recursive. We will refer to this particular case as the reciprocal rule. Similarly, although a recursive formula already exists for the quotient rule, no explicit formula exists. Therefore, in Section \ref{derivative u over v}, by combining the reciprocal formula with Leibniz's formula, we develop an explicit formula for the $n$-th derivative of the quotient of two functions. Finally, in Section \ref{Applications}, we apply the reciprocal and quotient rules developed to derive new partition identities as well as expressions for some special $n$-th derivatives. 

\section{n-th derivative of $1/v(x)$ (Reciprocal rule)} \label{deriv One Over f(x)}
We begin by introducing the concept of partitions as partitions are an essential part of the quotient rule we will develop. As defined by the author in \cite{RecurrentSums, MultipleSums}, a partition can be defined as follows: 
\begin{definition}
A partition of a non-negative integer $m$ is a set of positive integers whose sum equals $m$.
We can represent a partition of $m$ as an ordered set $(y_{k,1},\ldots,y_{k,m})$ that verifies
\begin{equation}
y_{k,1}+2y_{k,2}+ \cdots + my_{k,m}
=\sum_{i=1}^{m}{i\,y_{k,i}}
=m.
\end{equation}
\end{definition} 
\noindent The coefficient $y_{k,i}$ is the multiplicity of the integer $i$ in the $k$-th partition of $m$. Note that $0\leq y_{k,i}\leq m$ while $1\leq i \leq m$. Also note that the number of partitions of an integer $m$ is given by the partition function denoted $p(m)$ and hence, $1 \leq k \leq p(m)$.
In the remainder of this text, the subscript $k$ will be added to indicate that a given parameter is associated with a given partition. Similarly, for simplicity, we will omit the bounds of $i$ and write $\sum{iy_{k,i}}=m$ and $\sum{y_{k,i}}$. Furthermore, we define the following partition notation: 
\begin{equation} \label{pi_k}
\pi_k=\sum{iy_{k,i}},
\end{equation}
\begin{equation} \label{r_k}
r_k=\sum{y_{k,i}}.
\end{equation}
As partitions are not the main focus of this article, we will not go into more details. For readers interested in a more in-depth explanation about partitions, see \cite{andrews1998theory}. \\

Before we begin proving the main results of this section, let us introduce the following notation: In the remainder of this article, the letters $u$ and $v$ will be used to indicate a function of $x$. In other words, $u$ represents $u(x)$ and $v$ represents $v(x)$. 
\begin{definition}
Let us define the following shorthand notation: 
\begin{equation}
\left(v\right)^{(n)}=v^{(n)}=\frac{d^n}{dx^n}\left(v(x)\right). 
\end{equation}
\end{definition}
In order to prove the reciprocal rule, we need to first prove the following lemma. 
\begin{lemma} \label{l1}
We have that 
\begin{equation*}
\begin{split}
\sum_{j=0}^{n-1}{\binom{\sum{Y_{k,i}}-1}{Y_{k,1}, \ldots, Y_{k,{n-j}}-1, \ldots, Y_{k,{n}}}}
=\sum_{j=1}^{n}{\binom{\sum{Y_{k,i}}-1}{Y_{k,1}, \ldots, Y_{k,{j}}-1, \ldots, Y_{k,{n}}}}
=\binom{\sum{Y_{k,i}}}{Y_{k,1}, \ldots, Y_{k,{n}}}
.\end{split}
\end{equation*}
\end{lemma}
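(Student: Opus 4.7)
The lemma is, after a change of dummy variable, precisely the multinomial analogue of Pascal's rule. My plan is to first dispose of the leftmost equality as a relabeling, then prove the real content, which is the identity
\[
\sum_{j=1}^{n}\binom{N-1}{Y_{k,1},\ldots,Y_{k,j}-1,\ldots,Y_{k,n}}=\binom{N}{Y_{k,1},\ldots,Y_{k,n}},
\]
where I abbreviate $N=\sum Y_{k,i}$.

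For the first equality, I would simply substitute $j\mapsto n-j$ in the leftmost sum: as $j$ runs over $\{0,1,\ldots,n-1\}$, the index $n-j$ runs over $\{1,2,\ldots,n\}$, and each term is permuted to the corresponding term of the middle sum. This is purely a reindexing step and requires no calculation.

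For the second (substantive) equality I would give a direct algebraic proof using the factorial form of the multinomial coefficient. Writing
\[
\binom{N-1}{Y_{k,1},\ldots,Y_{k,j}-1,\ldots,Y_{k,n}}
=\frac{(N-1)!}{Y_{k,1}!\cdots(Y_{k,j}-1)!\cdots Y_{k,n}!}
=\frac{Y_{k,j}\,(N-1)!}{Y_{k,1}!\,Y_{k,2}!\cdots Y_{k,n}!},
\]
I would factor the common denominator out of the sum, leaving $\sum_{j=1}^{n}Y_{k,j}=N$ inside, so the sum collapses to $N!/(Y_{k,1}!\cdots Y_{k,n}!)$, which is the right-hand side.

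The only small subtlety I expect to have to address is the case where some $Y_{k,j}=0$: then $(Y_{k,j}-1)!$ is not literally defined, and I would adopt the standard convention that a multinomial coefficient with a negative entry is $0$. This is consistent with the manipulation above, since the identity $Y_{k,j}/Y_{k,j}!=1/(Y_{k,j}-1)!$ still yields $0$ on both sides when $Y_{k,j}=0$, so those terms contribute nothing and the telescoping $\sum Y_{k,j}=N$ is unaffected. Beyond this bookkeeping point the proof is a one-line computation, so I do not anticipate any real obstacle.
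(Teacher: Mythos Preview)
Your proposal is correct and mirrors the paper's own proof almost line for line: expand each multinomial coefficient via factorials, use $1/(Y_{k,j}-1)! = Y_{k,j}/Y_{k,j}!$, factor out the common denominator, and collapse $\sum_j Y_{k,j}=N$. Your remark on the $Y_{k,j}=0$ convention is a nice extra that the paper leaves implicit.
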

\begin{proof}
\begin{equation*}
\begin{split}
\sum_{j=0}^{n-1}{\binom{\sum{Y_{k,i}}-1}{Y_{k,1}, \ldots, Y_{k,{n-j}}-1, \ldots, Y_{k,{n}}}}
&=\sum_{j=0}^{n-1}{\frac{(\sum{Y_{k,i}}-1)!}{Y_{k,1}! \cdots Y_{k,{n-j}}! \cdots Y_{k,{n}}!}(Y_{k,{n-j}})} \\
&=\frac{(\sum{Y_{k,i}}-1)!}{Y_{k,1}! \cdots Y_{k,{n}}!}\sum_{j=1}^{n}{(Y_{k,{j}})} \\ 
&=\frac{(\sum{Y_{k,i}})!}{Y_{k,1}! \cdots Y_{k,{n}}!}=\binom{\sum{Y_{k,i}}}{Y_{k,1}, \ldots, Y_{k,{n}}}
.\end{split}
\end{equation*}
\end{proof}
Using the recursive formula for the quotient rule \cite{quotientFormula}, we derive the reciprocal rule.  
\begin{theorem}[Reciprocal rule] \label{derivative 1/u}
Let $v$ be an $n$ times differentiable function of $x$, for any $n\in\mathbb{N}$ and at every point where $v\neq0$, we have that 
\begin{equation*}
\left(\frac{1}{v}\right)^{(n)}
=\frac{d^n}{dx^n}\left(\frac{1}{v}\right)
=n!\sum_{\sum{iy_{k,i}}=n}{\binom{\sum{y_{k,i}}}{y_{k,1}, \ldots, y_{k,n}}\frac{(-1)^{\sum{y_{k,i}}}}{v^{\sum{y_{k,i}}+1}}\prod_{i=1}^{n}{\left[\frac{v^{(i)}}{i!}\right]^{y_{k,i}}}}.
\end{equation*}
\end{theorem}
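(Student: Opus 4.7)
The plan is to proceed by induction on $n$, using the standard recurrence that comes from differentiating the identity $v \cdot (1/v) = 1$ via Leibniz's rule: for $n \geq 1$,
$$\left(\frac{1}{v}\right)^{(n)} = -\frac{1}{v}\sum_{k=1}^{n}\binom{n}{k}\, v^{(k)} \left(\frac{1}{v}\right)^{(n-k)}.$$
This is precisely the recursive quotient formula of \cite{quotientFormula} specialized to the case of a reciprocal, so the theorem amounts to unfolding this recurrence into a closed form indexed by partitions. The base case $n=1$ is immediate: the unique partition of $1$ is $y_{1,1}=1$, and the right-hand side collapses to $-v'/v^2$, which agrees with $(1/v)'$.

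For the inductive step, I would substitute the induction hypothesis for each $(1/v)^{(n-k)}$ into the recurrence. This produces a double sum indexed by a choice of $k \in \{1,\ldots,n\}$ together with a partition $(y'_1,\ldots,y'_{n-k})$ of $n-k$. The key observation is a bijection between such pairs and pairs $(Y,k)$ where $Y = (Y_1,\ldots,Y_n)$ is a partition of $n$ and $k$ is an index with $Y_k \geq 1$: simply set $Y_k = y'_k + 1$ and $Y_i = y'_i$ for $i \neq k$. Under this correspondence one checks that $\sum_i Y_i = \sum_i y'_i + 1$, the factor $\frac{v^{(k)}}{k!}\prod_i [v^{(i)}/i!]^{y'_i}$ becomes $\prod_i [v^{(i)}/i!]^{Y_i}$, the sign becomes $(-1)^{\sum_i Y_i}$, and the denominator becomes $v^{\sum_i Y_i + 1}$. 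The numerical factor $\binom{n}{k}(n-k)!/k! = n!/(k!)^{2}$ combines with the multinomial coefficient of the partition of $n-k$ and the factor $1/k!$ already in the product to give $n!\binom{\sum_i Y_i - 1}{Y_1,\ldots,Y_k-1,\ldots,Y_n}$.

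After this reindexing, the only remaining inner sum at fixed $Y$ is
$$\sum_{k=1}^{n}\binom{\textstyle\sum_i Y_i - 1}{Y_1,\ldots,Y_k-1,\ldots,Y_n},$$
which by Lemma \ref{l1} equals the multinomial coefficient $\binom{\sum_i Y_i}{Y_1,\ldots,Y_n}$; terms with $Y_k=0$ contribute zero via the factor $Y_k$ appearing in the Lemma's proof, so the unrestricted sum is legitimate. Collecting everything reproduces exactly the claimed formula. The main obstacle I anticipate is purely bookkeeping — keeping the factorial identities, signs, and the bijection between partitions of $n-k$ (for varying $k$) and partitions of $n$ perfectly aligned — but once that is set up cleanly, Lemma \ref{l1} supplies the combinatorial consolidation that finishes the induction.
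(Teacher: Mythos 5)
Your proposal follows essentially the same route as the paper: the recurrence you obtain by differentiating $v\cdot(1/v)=1$ via Leibniz is exactly the paper's cited recursive formula specialized to $u=1$, and your inductive step — reindexing the double sum through the shift $Y_k=y'_k+1$ and consolidating the resulting sum of multinomial coefficients with Lemma \ref{l1} — is the paper's argument almost verbatim. The only blemish is the intermediate coefficient you quote as $\binom{n}{k}(n-k)!/k!=n!/(k!)^{2}$; the factor that actually survives after rewriting $v^{(k)}$ as $k!\left[v^{(k)}/k!\right]$ and absorbing the bracket into the product is $\binom{n}{k}(n-k)!\,k!=n!$, which is what your final per-term expression correctly records, so the slip is harmless.
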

\begin{remark}
{\em A very interesting and compact way of rewriting this theorem is as follows: }
\begin{equation*}
\left(\frac{1}{v}\right)^{(n)}
=n!\sum_{\sum{iy_{k,i}}=n}{C_{k}\prod_{i=1}^{n}{\frac{1}{y_{k,i}!}\left[\frac{v^{(i)}}{i!}\right]^{y_{k,i}}}}
\end{equation*}
where 
\begin{equation} \label{C_k}
C_k
=\frac{d^{\sum{y_{k,i}}}}{dv^{\sum{y_{k,i}}}}\left(\frac{1}{v}\right)
=\frac{(\sum{y_{k,i}})!(-1)^{\sum{y_{k,i}}}}{v^{\sum{y_{k,i}}+1}}
=\frac{(-1)^{r_k}r_k!}{v^{r_k+1}}
.\end{equation}
{\em As we can see, the general reciprocal rule using the $C_k$ notation is simple and easy to memorize. }
\end{remark}
\begin{remark}
{\em Let us define the notation $\{a\}_b$ corresponds to writing $b$ times the value $a$. Let $I_k=(\{1\}_{y_{k,1}}, \ldots, \{n\}_{y_{k,n}})$. Similarly, let $P_k=(y_{k,1}, \ldots, y_{k,n})$. Other interesting ways of writing the theorem are: }
\begin{equation*}
\left(\frac{1}{v}\right)^{(n)}
=\sum_{\sum{iy_{k,i}}=n}{\binom{n}{I_k} C_{k}\prod_{i=1}^{n}{\frac{\left[v^{(i)}\right]^{y_{k,i}}}{y_{k,i}!}}}
=\frac{1}{v}\sum_{\sum{iy_{k,i}}=n}{\binom{n}{I_k}\binom{\sum{y_{k,i}}}{P_k} \prod_{i=1}^{n}{\left[-\frac{v^{(i)}}{v}\right]^{y_{k,i}}}}
.\end{equation*}
\end{remark}
\begin{proof}
1. Base case: verify true for $n=1$. \\
\begin{equation*}
1!\sum_{\sum{iy_{k,i}}=1}{\binom{\sum{y_{k,i}}}{y_{k,1}, \ldots, y_{k,n}}\frac{(-1)^{\sum{y_{k,i}}}}{v^{\sum{y_{k,i}}+1}}\prod_{i=1}^{n}{\left[\frac{v^{(i)}}{i!}\right]^{y_{k,i}}}}
=\binom{1}{1}\frac{(-1)^1}{v^2}\left[\frac{v^{'}}{1!}\right]^{1}
=-\frac{v'}{v^2}
=\frac{d}{dx}\left(\frac{1}{v}\right)
.\end{equation*}
\begin{remark}
{\em We can also verify true for $n=0$. It is important to note that the partition assumed to correspond to zero is $(0, 0, \ldots)$. Hence, }
\begin{equation*}
0!\sum_{\sum{iy_{k,i}}=0}{\binom{\sum{y_{k,i}}}{y_{k,1}, \ldots, y_{k,n}}\frac{(-1)^{\sum{y_{k,i}}}}{v^{\sum{y_{k,i}}+1}}\prod_{i=1}^{n}{\left[\frac{v^{(i)}}{i!}\right]^{y_{k,i}}}}
=\binom{0}{0,0, \ldots}\frac{(-1)^0}{v^1}\left(1\right)
=\frac{1}{v}
=\left(\frac{1}{v}\right)^{(0)}
.\end{equation*}
\end{remark}
\noindent 2. Induction hypothesis: assume the statement is true until $(n-1)\in\mathbb{N}$. 
\begin{equation*}
\left(\frac{1}{v}\right)^{(n-1)}
=(n-1)!\sum_{\sum{iy_{k,i}}=n-1}{\binom{\sum{y_{k,i}}}{y_{k,1}, \ldots, y_{k,{n-1}}}\frac{(-1)^{\sum{y_{k,i}}}}{v^{\sum{y_{k,i}}+1}}\prod_{i=1}^{n-1}{\left[\frac{v^{(i)}}{i!}\right]^{y_{k,i}}}}.
\end{equation*}
3. Induction step: we will show that this statement is true for $n$. \\
We have to show the following statement to be true:  
\begin{equation*}
\left(\frac{1}{v}\right)^{(n)}
=n!\sum_{\sum{iy_{k,i}}=n}{\binom{\sum{y_{k,i}}}{y_{k,1}, \ldots, y_{k,{n}}}\frac{(-1)^{\sum{y_{k,i}}}}{v^{\sum{y_{k,i}}+1}}\prod_{i=1}^{n}{\left[\frac{v^{(i)}}{i!}\right]^{y_{k,i}}}}.
\end{equation*}
$$ \\ $$ 
Using the recursive formula developped in \cite{quotientFormula} with $u=1$, we have 
\begin{equation*}
\left(\frac{1}{v}\right)^{(n)}
=\frac{(-1)n!}{v}\sum_{j=1}^{n}{\frac{v^{(n+1-j)}}{(n+1-j)!}\frac{\left(\frac{1}{v}\right)^{(j-1)}}{(j-1)!}}
=\frac{(-1)n!}{v}\sum_{j=0}^{n-1}{\frac{v^{(n-j)}}{(n-j)!}\frac{\left(\frac{1}{v}\right)^{(j)}}{j!}}.
\end{equation*}
Applying the induction hypothesis, we get 
\begin{equation*}
\begin{split}
\left(\frac{1}{v}\right)^{(n)}
&=\frac{(-1)n!}{v}\sum_{j=0}^{n-1}{\frac{v^{(n-j)}}{(n-j)!} \sum_{\sum{iy_{k,i}}=j}{\binom{\sum{y_{k,i}}}{y_{k,1}, \ldots, y_{k,{j}}}\frac{(-1)^{\sum{y_{k,i}}}}{v^{\sum{y_{k,i}}+1}}\prod_{i=1}^{j}{\left[\frac{v^{(i)}}{i!}\right]^{y_{k,i}}}}}\\
&=n!\sum_{j=0}^{n-1}{\frac{v^{(n-j)}}{(n-j)!} \sum_{\sum{iy_{k,i}}=j}{\binom{\sum{y_{k,i}}}{y_{k,1}, \ldots, y_{k,{j}}}\frac{(-1)^{\sum{y_{k,i}}+1}}{v^{\sum{y_{k,i}}+2}}\prod_{i=1}^{j}{\left[\frac{v^{(i)}}{i!}\right]^{y_{k,i}}}}}
.\end{split}
\end{equation*}
Let us define an extension $(y_{k,1},\ldots,y_{k,n})$ of $(y_{k,1},\ldots,y_{k,j})$ where $y_{k,j+1}=\cdots=y_{k,n}=0$. Hence, we can write that 
\begin{equation*}
\begin{split}
\left(\frac{1}{v}\right)^{(n)}
&=n!\sum_{j=0}^{n-1}{\frac{v^{(n-j)}}{(n-j)!} \sum_{\sum{iy_{k,i}}=j}{\binom{\sum{y_{k,i}}}{y_{k,1}, \ldots, y_{k,{n}}}\frac{(-1)^{\sum{y_{k,i}}+1}}{v^{\sum{y_{k,i}}+2}}\prod_{i=1}^{n}{\left[\frac{v^{(i)}}{i!}\right]^{y_{k,i}}}}}\\
&=n!\sum_{j=0}^{n-1}{\sum_{\sum{iy_{k,i}}+(n-j)\cdot1=n}{\binom{\sum{y_{k,i}}}{y_{k,1}, \ldots, y_{k,{n}}}\frac{(-1)^{\sum{y_{k,i}}+1}}{v^{\sum{y_{k,i}}+2}}{\left[\frac{v^{(n-j)}}{(n-j)!}\right]}\prod_{i=1}^{n}{\left[\frac{v^{(i)}}{i!}\right]^{y_{k,i}}}}}
.\end{split}
\end{equation*}
Notice that $1 \leq n-j \leq n$ as $0 \leq j \leq n-1$. Now, for all $(n-j)\in[1,n]$, let us associate with each partition $(y_{k,1}, \ldots, y_{k,n})$, the partition $(Y_{k,1}, \ldots, Y_{k,n})$ such that 
\begin{equation*}
\begin{cases}
Y_{k,i}=y_{k,i}+1, & \text{for } i=n-j, \\
Y_{k,i}=y_{k,i}, & \text{otherwise}. 
\end{cases}
\end{equation*}
Notice that $\sum{Y_{k,i}}=\sum{y_{k,i}}+1$ and that $\sum{iY_{k,i}}=n$. Hence, we can write 
\begin{equation*}
\begin{split}
\left(\frac{1}{v}\right)^{(n)}
&=n!\sum_{j=0}^{n-1}{\sum_{\sum{iY_{k,i}}=n}{\binom{\sum{Y_{k,i}}-1}{Y_{k,1}, \ldots, Y_{k,{n-j}}-1, \ldots, Y_{k,{n}}}\frac{(-1)^{\sum{Y_{k,i}}}}{v^{\sum{Y_{k,i}}+1}}\prod_{i=1}^{n}{\left[\frac{v^{(i)}}{i!}\right]^{Y_{k,i}}}}} \\
&=n!\sum_{\sum{iY_{k,i}}=n}{\frac{(-1)^{\sum{Y_{k,i}}}}{v^{\sum{Y_{k,i}}+1}}\left(\prod_{i=1}^{n}{\left[\frac{v^{(i)}}{i!}\right]^{Y_{k,i}}}\right)}\sum_{j=0}^{n-1}{\binom{\sum{Y_{k,i}}-1}{Y_{k,1}, \ldots, Y_{k,{n-j}}-1, \ldots, Y_{k,{n}}}}
.\end{split}
\end{equation*}
Applying Lemma \ref{l1} to the inner sum, we obtain 
\begin{equation*}
\left(\frac{1}{v}\right)^{(n)}
=n!\sum_{\sum{iY_{k,i}}=n}{\binom{\sum{Y_{k,i}}}{Y_{k,1}, \ldots, Y_{k,{n}}}\frac{(-1)^{\sum{Y_{k,i}}}}{v^{\sum{Y_{k,i}}+1}}\prod_{i=1}^{n}{\left[\frac{v^{(i)}}{i!}\right]^{Y_{k,i}}}}
.\end{equation*}
This concludes our proof by induction. 
\end{proof}
\begin{remark}
{\em As we can see, the reciprocal rule derived (Theorem \ref{derivative 1/u}) is very similar to the product rule for the product of several functions: }
\begin{equation}
\left(u_1 \cdots u_m \right)^{(n)}
=\sum_{\ell_1+\cdots+\ell_m=n}{\binom{n}{\ell_1,\ldots,\ell_m}\prod_{i=1}^{m}{u_i^{(\ell_i)}}}
\end{equation}
\end{remark}
There exists other alternatives for proving Theorem \ref{derivative 1/u}. In what follows, we will present a few propositions that will be useful for doing so. \\
 
First, let us prove the following useful proposition for the derivative of a product. 
\begin{proposition} \label{p1}
Let $u_1$, $\ldots$, $u_n$ be differentiable functions of $x$, we have that 
\begin{equation*}
\frac{d}{dx}\left(\prod_{i=1}^{n}{u_i}\right)
=\left(\prod_{i=1}^{n}{u_i}\right)\sum_{j=1}^{n}{\frac{u_{i}^{'}}{u_i}}
.\end{equation*}
\end{proposition}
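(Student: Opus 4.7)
The plan is to prove this by straightforward induction on $n$, using the ordinary two-function product rule as the engine. Since the stated form $\prod_i u_i \cdot \sum_j u_j'/u_j$ requires dividing by each $u_j$, the identity should be understood at points where all $u_i$ are nonzero; alternatively one may first prove the equivalent ``cleared denominators'' form
\begin{equation*}
\frac{d}{dx}\left(\prod_{i=1}^{n} u_i\right) = \sum_{j=1}^{n} u_j' \prod_{i \neq j} u_i,
\end{equation*}
and then factor out $\prod_i u_i$ at points where no factor vanishes.

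For the base case $n=1$ the identity reads $u_1' = u_1 \cdot (u_1'/u_1)$, which is immediate. For the inductive step, I would assume the formula for $n-1$ factors and split the product as $\prod_{i=1}^{n} u_i = \bigl(\prod_{i=1}^{n-1} u_i\bigr)\cdot u_n$. Applying the classical product rule for two factors gives
\begin{equation*}
\left(\prod_{i=1}^{n} u_i\right)' = \left(\prod_{i=1}^{n-1} u_i\right)' u_n + \left(\prod_{i=1}^{n-1} u_i\right) u_n'.
\end{equation*}
The induction hypothesis rewrites the first term as $\bigl(\prod_{i=1}^{n-1} u_i\bigr) \sum_{j=1}^{n-1} (u_j'/u_j) \cdot u_n = \bigl(\prod_{i=1}^{n} u_i\bigr) \sum_{j=1}^{n-1} u_j'/u_j$, while the second term is exactly $\bigl(\prod_{i=1}^{n} u_i\bigr)\cdot u_n'/u_n$. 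Combining the two sums absorbs the last index into the range $j=1,\ldots,n$, giving the claimed identity.

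An alternative route would be via logarithmic differentiation: taking $\log\bigl(\prod u_i\bigr) = \sum \log u_i$ and differentiating yields $\bigl(\prod u_i\bigr)'/\bigl(\prod u_i\bigr) = \sum u_j'/u_j$ directly. This is conceptually cleaner but requires being careful about signs and branches when some $u_i$ are negative or complex, so the induction approach is preferable for a self-contained argument in the generality stated here. There is no real obstacle in either proof; the only subtlety worth flagging is the implicit hypothesis that the $u_i$ are nonzero wherever the right-hand side is evaluated.
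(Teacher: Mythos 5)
Your proof is correct, but it takes a genuinely different route from the paper. The paper proves this proposition by logarithmic differentiation: it sets $f = \prod_i u_i$, writes $\ln f = \sum_i \ln u_i$, and differentiates both sides to get $f'/f = \sum_i u_i'/u_i$ --- precisely the ``alternative route'' you mention and then set aside. Your primary argument instead proceeds by induction on $n$ using the two-factor product rule, first establishing the cleared-denominator form $\sum_{j} u_j' \prod_{i \neq j} u_i$. Your approach buys something real: the logarithmic argument implicitly requires each $u_i$ to be positive (or at least demands care with branches), whereas your induction is valid without that restriction, and the cleared-denominator identity holds even where some $u_i$ vanish. The subtlety you flag --- that the stated form only makes sense where all $u_i \neq 0$ --- is one the paper does not address (its proof also contains a cosmetic slip, introducing and then cancelling spurious minus signs). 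The paper's route is shorter and motivates the formula conceptually; yours is more elementary and fully self-contained in the generality actually stated.
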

\begin{proof}
Let $f(x)=u_1 \cdots u_n$. Taking the logarithm of both sides, we get 
\begin{equation*}
\ln f(x)
=\ln\left(\prod_{i=1}^{n}{u_i}\right)
=\sum_{i=1}^{n}{\ln u_i}. 
\end{equation*}
Differentiating both sides, we get 
\begin{equation*}
-\frac{f'(x)}{f(x)}
=-\sum_{i=1}^{n}{\frac{u_{i}^{'}}{u_i}}. 
\end{equation*}
Canceling the minus sign, we obtain the desired formula. 
\end{proof}
Now we prove the following partition identity involving a special sum of multinomial coefficients. This expression is equivalent to Lemma \ref{l1} that we used to prove Theorem \ref{derivative 1/u}. 
\begin{proposition} \label{multinomialSum}
We have that 
\begin{equation*}
\sum_{\substack{\sum{\varphi_i}=\sum{Y_i}-1 \\ \varphi_i \leq Y_i}}{\binom{\sum{Y_i}-1}{\varphi_1, \ldots, \varphi_n}}
=\binom{\sum{Y_i}}{Y_1, \ldots, Y_n}
.\end{equation*}
\end{proposition}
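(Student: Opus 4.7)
The plan is to reduce this proposition directly to Lemma \ref{l1} by a simple change of summation index, exploiting the rigidity of the constraints. The key observation is that requiring $\sum_i \varphi_i = \sum_i Y_i - 1$ together with $0 \leq \varphi_i \leq Y_i$ forces the nonnegative integer differences $\delta_i := Y_i - \varphi_i$ to satisfy $\sum_i \delta_i = 1$. Hence exactly one of these differences equals $1$, and all others vanish: there is a unique index $j$ with $\varphi_j = Y_j - 1$ (and necessarily $Y_j \geq 1$), while $\varphi_i = Y_i$ for every $i \neq j$.

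This characterization provides a bijection between admissible tuples $(\varphi_1, \ldots, \varphi_n)$ and indices $j \in \{1, \ldots, n\}$ with $Y_j \geq 1$. Consequently, the left-hand side of the proposition can be rewritten as
\begin{equation*}
\sum_{\substack{j=1 \\ Y_j \geq 1}}^{n} \binom{\sum_i Y_i - 1}{Y_1, \ldots, Y_j - 1, \ldots, Y_n}.
\end{equation*}
Any term with $Y_j = 0$ would produce a factorial of $-1$ in the denominator of the multinomial coefficient and is conventionally treated as zero, so the restriction $Y_j \geq 1$ can be dropped without changing the value of the sum.

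At this point the expression matches exactly the middle member of the identity in Lemma \ref{l1}, which has already been shown to equal $\binom{\sum_i Y_i}{Y_1, \ldots, Y_n}$. Invoking that lemma thus finishes the proof.

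The only real subtlety is the combinatorial bookkeeping: recognizing that the constraints collapse the summation set to a one-parameter family indexed by the coordinate being decremented, and handling the boundary case $Y_j = 0$ via the usual convention on multinomial coefficients with negative entries. No calculation beyond this is needed, since Lemma \ref{l1} does the remaining work.
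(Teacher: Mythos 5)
Your proof is correct and follows essentially the same route as the paper: both arguments use the substitution $Y_i-\varphi_i$ to see that the constraints force exactly one coordinate to be decremented by $1$, collapsing the sum to a single-index family. The only difference is that you then cite Lemma \ref{l1} for the final evaluation, whereas the paper carries out the equivalent factorial computation $\sum_i Y_i!/(Y_i-1)! = \sum_i Y_i$ directly; the paper itself remarks that the proposition is equivalent to Lemma \ref{l1}, so your reduction is entirely in its spirit.
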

\begin{proof}
Let $Z_i=Y_i-\varphi_i$ for $1 \leq i \leq n$. 
\begin{equation*}
\begin{split}
\sum_{\substack{\sum{\varphi_i}=\sum{Y_i}-1 \\ \varphi_i \leq Y_i}}{\binom{\sum{Y_i}-1}{\varphi_1, \ldots, \varphi_n}}
&=\frac{(\sum{Y_i}-1)!}{Y_1! \cdots Y_n!}\sum_{\substack{\sum{\varphi_i}=\sum{Y_i}-1 \\ \varphi_i \leq Y_i}}{\frac{Y_1! \cdots Y_n!}{\varphi_1! \cdots \varphi_n!}}\\
&=\frac{(\sum{Y_i}-1)!}{Y_1! \cdots Y_n!}\sum_{\substack{\sum{Z_i}=1 \\ Z_i \geq 0}}{\frac{Y_1! \cdots Y_n!}{(Y_1-Z_1)! \cdots (Y_n-Z_n)!}}
.\end{split}
\end{equation*}
Knowing that the $Z_i$'s are non-negative integers, the only way for their sum to be equal to 1 is if one of them is equal to 1 and the others are equal to 0. Hence, we have that 
\begin{equation*}
\sum_{\substack{\sum{Z_i}=1 \\ Z_i \geq 0}}{\frac{Y_1! \cdots Y_n!}{(Y_1-Z_1)! \cdots (Y_n-Z_n)!}}
=\sum_{i=1}^{n}{\frac{Y_i!}{(Y_i-1)!}}
=\sum_{i=1}^{n}{Y_i}
.\end{equation*}
Substituting back, we obtain the proposition. 
\end{proof}
\section{n-th derivative of $u(x)/v(x)$ (Quotient rule)} \label{derivative u over v}
In this section, we combine Theorem \ref{derivative 1/u} with Leibniz's rule to obtain the general quotient rule. 
\begin{theorem}[Quotient rule] \label{derivative u/v}
Let $u$ and $v$ be $n$ times differentiable functions of $x$, for any $n\in\mathbb{N}$ and at every point where $v\neq0$, we have that 
\begin{equation*}
\begin{split}
\left(\frac{u}{v}\right)^{(n)}
=\frac{d^n}{dx^n}\left(\frac{u}{v}\right)
&=n!\sum_{\ell=0}^{n}{\frac{u^{(n-\ell)}}{(n-\ell)!}\sum_{\sum{iy_{k,i}}=\ell}{\binom{\sum{y_{k,i}}}{y_{k,1}, \ldots, y_{k,\ell}}\frac{(-1)^{\sum{y_{k,i}}}}{v^{\sum{y_{k,i}}+1}}\prod_{i=1}^{\ell}{\left[\frac{v^{(i)}}{i!}\right]^{y_{k,i}}}}}\\
&=n!\sum_{\pi_k=0}^{n}{\frac{u^{(n-\pi_k)}}{(n-\pi_k)!}{\binom{\sum{y_{k,i}}}{y_{k,1}, \ldots, y_{k,\pi_k}}\frac{(-1)^{\sum{y_{k,i}}}}{v^{\sum{y_{k,i}}+1}}\prod_{i=1}^{\pi_k}{\left[\frac{v^{(i)}}{i!}\right]^{y_{k,i}}}}}
.\end{split}
\end{equation*}
\end{theorem}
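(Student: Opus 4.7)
The plan is to reduce the quotient rule to Leibniz's product rule combined with the reciprocal rule (Theorem \ref{derivative 1/u}). Write $u/v = u \cdot (1/v)$ and apply the standard Leibniz formula:
\begin{equation*}
\left(\frac{u}{v}\right)^{(n)} = \sum_{\ell=0}^{n}\binom{n}{\ell}\,u^{(n-\ell)}\left(\frac{1}{v}\right)^{(\ell)}.
\end{equation*}
This is the natural first step because Leibniz handles the $u$ factor cleanly and isolates all $v$-dependence inside the $(1/v)^{(\ell)}$ term, which Theorem \ref{derivative 1/u} already describes explicitly.

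Next I would substitute the reciprocal rule into each term $(1/v)^{(\ell)}$. After substitution, a typical term becomes
\begin{equation*}
\binom{n}{\ell}\,u^{(n-\ell)}\cdot \ell!\sum_{\sum iy_{k,i}=\ell}\binom{\sum y_{k,i}}{y_{k,1},\ldots,y_{k,\ell}}\frac{(-1)^{\sum y_{k,i}}}{v^{\sum y_{k,i}+1}}\prod_{i=1}^{\ell}\left[\frac{v^{(i)}}{i!}\right]^{y_{k,i}}.
\end{equation*}
The simplification $\binom{n}{\ell}\ell! = \tfrac{n!}{(n-\ell)!}$ then produces exactly the factor $\tfrac{u^{(n-\ell)}}{(n-\ell)!}$ that appears in the first displayed form of the theorem, and pulling the $n!$ out front yields the stated identity.

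To obtain the second (more compact) form, I would re-index the double sum by observing that for every partition indexed by $k$ the quantity $\pi_k = \sum i y_{k,i}$ is determined, so the outer sum over $\ell \in \{0,\ldots,n\}$ together with the inner sum over partitions of $\ell$ is equivalent to a single sum over all partitions $k$ with $\pi_k \in \{0,\ldots,n\}$. Replacing $\ell$ by $\pi_k$ in each factor and letting the partition be automatically truncated at index $\pi_k$ gives the second equality. I would also pause briefly to justify the $\ell=0$ term: the convention noted in the Remark after Theorem \ref{derivative 1/u} (that the empty partition contributes the value $1/v$) ensures that this term correctly reproduces $u^{(n)}/v$.

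I do not anticipate any serious obstacle: the entire work is algebraic bookkeeping once the reciprocal rule is in hand. The only subtle points are (i) correctly absorbing $\binom{n}{\ell}\ell!$ into $n!/(n-\ell)!$, and (ii) verifying that the re-indexing from the double sum into the single sum over partitions with $\pi_k\le n$ is a genuine bijection on index sets. Both are routine, so the proof is essentially a two-line computation plus a re-indexing remark.
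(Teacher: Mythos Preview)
Your proposal is correct and follows exactly the paper's approach: the paper's proof is the single line ``Applying Leibniz's rule to Theorem \ref{derivative 1/u}, we obtain this theorem,'' and your write-up is a faithful expansion of that sentence, including the $\binom{n}{\ell}\ell!=n!/(n-\ell)!$ simplification and the re-indexing via $\pi_k$.
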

\begin{proof}
Applying Leibniz's rule to Theorem \ref{derivative 1/u}, we obtain this theorem. 
\end{proof}
\begin{remark}
{\em A much more compact way of expressing this theorem is as follows: }
\begin{equation*}
\left(\frac{u}{v}\right)^{(n)}
=n!\sum_{\ell=0}^{n}{\sum_{\sum{iy_{k,i}}=n-\ell}{C_k\left[\frac{u^{(\ell)}}{\ell!}\right]\prod_{i=1}^{n-\ell}{\frac{1}{y_{k,i}!}\left[\frac{v^{(i)}}{i!}\right]^{y_{k,i}}}}}
=n!\sum_{\pi_k=0}^{n}{{C_k\frac{u^{(n-\pi_k)}}{(n-\pi_k)!}\prod_{i=1}^{\pi_k}{\frac{1}{y_{k,i}!}\left[\frac{v^{(i)}}{i!}\right]^{y_{k,i}}}}}
\end{equation*}
{\em where $C_k$ is as defined in Eq.~(\ref{C_k}) and $\pi_k$ is as defined in Eq.~(\ref{pi_k}).} \\
{\em Using the $C_k$ and $\pi_k$ notation, we obtain a simple and easy to memorize expression for the general quotient rule that could potentially be taught to university students at the same time as the general product rule. }
\end{remark}
\begin{remark}
{\em Let $I_k=(\{1\}_{y_{k,1}}, \ldots, \{\pi_k\}_{y_{k,\pi_k}})$ and $P_k=(y_{k,1},\ldots, y_{k,\pi_k})$. We can write the following interesting but not very practical expressions: }
\begin{equation*}
\begin{split}
\left(\frac{u}{v}\right)^{(n)}
&=\sum_{\pi_k=0}^{n}{{C_{k} \binom{n}{I_k,n-\pi_k} u^{(n-\pi_k)}\prod_{i=1}^{\pi_k}{\frac{\left[v^{(i)}\right]^{y_{k,i}}}{y_{k,i}!}}}} \\
&=\sum_{\pi_k=0}^{n}{{\binom{n}{I_k,n-\pi_k}\binom{\sum{y_{k,i}}}{P_k} \frac{u^{(n-\pi_k)}}{v}\prod_{i=1}^{\pi_k}{\left[-\frac{v^{(i)}}{v}\right]^{y_{k,i}}}}}
.\end{split}
\end{equation*}
\end{remark}
\section{Applications} \label{Applications}
Let us first define some notation to simplify the expressions we will derive. For a given partition $(y_{k,1}, \ldots, y_{k,n})$ of $n$, we define the following notation: 
\begin{align}
c_k&=\prod_{i=1}^{n}{\frac{1}{i^{y_{k,i}}y_{k,i}!}},
&
\overline{c}_k&=\prod_{i=1}^{n}{\frac{(-1)^{y_{k,i}}}{i^{y_{k,i}}y_{k,i}!}}. \\
p_k&=\prod_{i=1}^{n}{\frac{1}{i!^{y_{k,i}}y_{k,i}!}},
& 
\overline{p}_k&=\prod_{i=1}^{n}{\frac{(-1)^{y_{k,i}}}{i!^{y_{k,i}}y_{k,i}!}}. \\
q_k&=\prod_{i=1}^{n}{\frac{1}{i!^{y_{k,i}}}}, 
&
\overline{q}_k&=\prod_{i=1}^{n}{\frac{(-1)^{y_{k,i}}}{i!^{y_{k,i}}}}.
\end{align}
\subsection{Partition identities} \label{Partition identities}
In this section, we will show how the quotient rule developed can be used to derive partition identities. In particular, we will derive a few special partition identities. 
\begin{proposition}
For any $n\in\mathbb{N}$, we have that 
\begin{equation*}
\sum_{\sum{iy_{k,i}}=n}{\binom{\sum{y_{k,i}}}{y_{k,1},\ldots,y_{k,n}}(-1)^{\sum{y_{k,i}}}\prod_{i=1}^{n}{\frac{1}{i!^{y_{k,i}}}}}
=\frac{(-1)^n}{n!}
.\end{equation*}
Using the notation, this proposition can be expressed as
\begin{equation*}
\sum_{\sum{iy_{k,i}}=n}{\binom{r_k}{y_{k,1},\ldots,y_{k,n}}(-1)^{r_k}q_k}
=\sum_{\sum{iy_{k,i}}=n}{\binom{r_k}{y_{k,1},\ldots,y_{k,n}}\overline{q}_k}
=\frac{(-1)^n}{n!}
.\end{equation*}
\end{proposition}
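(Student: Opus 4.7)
The plan is to specialize the reciprocal rule (Theorem \ref{derivative 1/u}) to the exponential function $v(x)=e^{x}$, for which all derivatives coincide and the sum collapses to the desired identity.

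First I would compute the left-hand side of Theorem \ref{derivative 1/u} directly: since $1/v = e^{-x}$, we have $(1/v)^{(n)} = (-1)^{n} e^{-x}$. Next I would simplify the right-hand side using $v^{(i)} = e^{x}$ for every $i$. The product becomes
\begin{equation*}
\prod_{i=1}^{n}\left[\frac{v^{(i)}}{i!}\right]^{y_{k,i}} = e^{x \sum y_{k,i}} \prod_{i=1}^{n}\frac{1}{(i!)^{y_{k,i}}} = e^{x r_k}\prod_{i=1}^{n}\frac{1}{(i!)^{y_{k,i}}},
\end{equation*}
and the prefactor $1/v^{r_k+1} = e^{-x(r_k+1)}$ combines with this to give an overall $e^{-x}$ independent of $k$.

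Equating the two computations and canceling $e^{-x}$ and $n!$ will yield exactly
\begin{equation*}
\sum_{\sum iy_{k,i}=n}\binom{r_k}{y_{k,1},\ldots,y_{k,n}}(-1)^{r_k}\prod_{i=1}^{n}\frac{1}{(i!)^{y_{k,i}}} = \frac{(-1)^{n}}{n!},
\end{equation*}
which is the claim in both of its stated forms (the $\overline{q}_k$ version follows by absorbing $(-1)^{r_k}=\prod(-1)^{y_{k,i}}$ into the product). There is essentially no obstacle — the main thing to verify carefully is the exponent bookkeeping, namely that the $e^{x r_k}$ from the product cancels $r_k$ of the $r_k+1$ copies of $v$ in the denominator, leaving a single factor of $e^{-x}$ that matches the left-hand side. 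One could alternatively derive the same identity by differentiating $e^{-x}$ using the generalized product rule applied through the Taylor expansion, but specializing the already-proved reciprocal rule is by far the shortest path.
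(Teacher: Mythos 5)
Your proposal is correct and is exactly the paper's own proof: specialize Theorem \ref{derivative 1/u} to $v(x)=e^{x}$, use $v^{(i)}=e^{x}$ to collapse the partition-dependent powers of $e^{x}$ into a single $e^{-x}$, and compare with $(e^{-x})^{(n)}=(-1)^{n}e^{-x}$. The exponent bookkeeping you flag does check out, so nothing further is needed.
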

\begin{remark}
We can also rewrite it as follows:
\begin{equation*}
\sum_{\sum{iy_{k,i}}=n}{r_k!(-1)^{r_k}\prod_{i=1}^{n}{\frac{1}{i!^{y_{k,i}}y_{k,i}!}}}
=\frac{(-1)^n}{n!}
.\end{equation*}
Using the notation, this proposition can be expressed as
\begin{equation*}
\sum_{\sum{iy_{k,i}}=n}{r_k!(-1)^{r_k}p_k}
=\sum_{\sum{iy_{k,i}}=n}{r_k!\overline{p}_k}
=\frac{(-1)^n}{n!}
.\end{equation*}
\end{remark}
\begin{proof}
From Theorem \ref{derivative 1/u} with $v(x)=e^x$ and knowing that $v^{(i)}(x)=e^x$ for all $i$, we get 
\begin{equation*}
\begin{split}
\frac{d^n}{dx^n}{\left(\frac{1}{e^x}\right)}
&=n!\sum_{\sum{iy_{k,i}}=n}{\binom{\sum{y_{k,i}}}{y_{k,1}, \ldots, y_{k,n}}\frac{(-1)^{\sum{y_{k,i}}}}{(e^x)^{\sum{y_{k,i}}+1}}\prod_{i=1}^{n}{\left[\frac{e^x}{i!}\right]^{y_{k,i}}}} \\
&=n!e^{-x}\sum_{\sum{iy_{k,i}}=n}{\binom{\sum{y_{k,i}}}{y_{k,1},\ldots,y_{k,n}}(-1)^{\sum{y_{k,i}}}\prod_{i=1}^{n}{\frac{1}{i!^{y_{k,i}}}}}.
\end{split}
\end{equation*}
Noticing that 
\begin{equation*}
\frac{d^n}{dx^n}{\left(\frac{1}{e^x}\right)}
=\frac{d^n}{dx^n}{\left({e^{-x}}\right)}
=(-1)^n e^{-x},
\end{equation*}
we obtain the proposition. 
\end{proof}
\begin{proposition}
For any $n\in\mathbb{N}$ and any $m\in\mathbb{N}^*$, we have that 
\begin{equation*}
\sum_{\sum{iy_{k,i}}=n}{\binom{\sum{y_{k,i}}}{y_{k,1}, \ldots, y_{k,n}}(-1)^{\sum{y_{k,i}}}\prod_{i=1}^{n}{\left[\binom{m}{i}\right]^{y_{k,i}}}}
=(-1)^n \binom{n+m-1}{m-1}. 
\end{equation*}
\end{proposition}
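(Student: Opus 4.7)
The plan is to mimic the proof of the preceding proposition but with a different test function. In that proof, $v(x)=e^x$ was substituted into the reciprocal rule (Theorem \ref{derivative 1/u}) to turn the identity into a statement about partition sums, by exploiting the fact that $v^{(i)}/i!$ simplifies nicely. Here, the factor $\binom{m}{i}^{y_{k,i}}$ on the right-hand side strongly suggests the choice $v(x)=(1+x)^m$, because then
\[
\frac{v^{(i)}(x)}{i!}=\binom{m}{i}(1+x)^{m-i},
\]
which at $x=0$ becomes precisely $\binom{m}{i}$, while $v(0)=1$.

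With this choice, I would substitute into Theorem \ref{derivative 1/u} and evaluate at $x=0$. The factor $1/v^{\sum y_{k,i}+1}$ collapses to $1$, and $\prod_i [v^{(i)}/i!]^{y_{k,i}}$ becomes $\prod_i \binom{m}{i}^{y_{k,i}}$, so the right-hand side of Theorem \ref{derivative 1/u} evaluated at $0$ is exactly $n!$ times the sum appearing in the proposition.

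For the left-hand side, I would compute $(1/v)^{(n)}=\frac{d^n}{dx^n}(1+x)^{-m}$ directly. Using the falling-factorial formula for derivatives of a power,
\[
\frac{d^n}{dx^n}(1+x)^{-m}=(-1)^n m(m+1)\cdots(m+n-1)(1+x)^{-m-n},
\]
and evaluating at $x=0$ gives $(-1)^n\,m(m+1)\cdots(m+n-1)=(-1)^n\,n!\binom{n+m-1}{m-1}$.

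Equating the two evaluations and dividing by $n!$ yields the claimed identity. There is no real obstacle here; the only thing to double-check is the rewriting of the Pochhammer $m(m+1)\cdots(m+n-1)$ as $n!\binom{n+m-1}{m-1}$, and the trivial observation that $v(0)\ne 0$ so the reciprocal rule applies at that point.
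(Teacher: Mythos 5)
Your proof is correct and is essentially the paper's own argument: the paper applies Theorem \ref{derivative 1/u} to $v(x)=x^m$ and cancels the powers of $x$ on both sides, whereas you use the translate $v(x)=(1+x)^m$ and evaluate at $x=0$ — a purely cosmetic difference. The key computation, comparing the reciprocal rule against the direct formula $\frac{d^n}{dx^n}v^{-1}=(-1)^n n!\binom{n+m-1}{m-1}v^{-1-n/m}\cdot(\text{appropriate power})$, and the identity $m(m+1)\cdots(m+n-1)=n!\binom{n+m-1}{m-1}$, are exactly as in the paper.
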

\begin{proof}
Let $v(x)=x^m$, then $v^{(i)}=i!\binom{m}{i}x^{m-i}$. Hence, from Theorem \ref{derivative 1/u}, we have 
\begin{equation*}
\begin{split}
\frac{d^n}{dx^n}{\left(\frac{1}{x^m}\right)}
&=n!\sum_{\sum{iy_{k,i}}=n}{\binom{\sum{y_{k,i}}}{y_{k,1}, \ldots, y_{k,n}}\frac{(-1)^{\sum{y_{k,i}}}}{x^{m\sum{y_{k,i}}+ m}}\prod_{i=1}^{n}{\left[x^{m-i}\binom{m}{i}\right]^{y_{k,i}}}} \\
&=n!\sum_{\sum{iy_{k,i}}=n}{\binom{\sum{y_{k,i}}}{y_{k,1}, \ldots, y_{k,n}}\frac{(-1)^{\sum{y_{k,i}}}}{x^{m\sum{y_{k,i}}+m}}(x^{m\sum{y_{k,i}}-n})\prod_{i=1}^{n}{\left[\binom{m}{i}\right]^{y_{k,i}}}} \\
&=n!x^{-m-n}\sum_{\sum{iy_{k,i}}=n}{\binom{\sum{y_{k,i}}}{y_{k,1}, \ldots, y_{k,n}}(-1)^{\sum{y_{k,i}}}\prod_{i=1}^{n}{\left[\binom{m}{i}\right]^{y_{k,i}}}}.
\end{split}
\end{equation*}
Noticing that 
\begin{equation*}
\frac{d^n}{dx^n}{\left(\frac{1}{x^m}\right)}
=\frac{d^n}{dx^n}{\left({x^{-m}}\right)}
=(-1)^n n!\binom{n+m-1}{m-1}  x^{-m-n},
\end{equation*}
we obtain the proposition. 
\end{proof}
\begin{corollary}
Setting $m=n$, we get 
\begin{equation*}
\sum_{\sum{iy_{k,i}}=n}{\binom{\sum{y_{k,i}}}{y_{k,1}, \ldots, y_{k,n}}(-1)^{\sum{y_{k,i}}}\prod_{i=1}^{n}{\left[\binom{n}{i}\right]^{y_{k,i}}}}
=(-1)^n \binom{2n-1}{n-1}. 
\end{equation*}
\end{corollary}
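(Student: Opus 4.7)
The plan is to invoke the preceding proposition directly with the substitution $m = n$, since that is precisely what the corollary's phrasing signals. On the left-hand side, every factor $\binom{m}{i}$ becomes $\binom{n}{i}$, so the sum matches the corollary verbatim. On the right-hand side, $(-1)^n \binom{n+m-1}{m-1}$ collapses to $(-1)^n \binom{2n-1}{n-1}$. That is the entire argument.

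Since the heavy lifting has already been done — the preceding proposition was proved by applying Theorem \ref{derivative 1/u} to $v(x) = x^m$ and comparing with $\tfrac{d^n}{dx^n}(x^{-m}) = (-1)^n n! \binom{n+m-1}{m-1} x^{-m-n}$ — there is no real obstacle here. The only point worth a moment's thought is that the product in the identity runs over $1 \leq i \leq n$, and for $m = n$ the coefficients $\binom{n}{i}$ are all well defined and nonzero on this range, so no terms degenerate or need to be excluded. No adjustment to the summation range is required.

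If I wanted an independent verification rather than a substitution, I would redo the specialization from scratch: set $v(x) = x^n$ in Theorem \ref{derivative 1/u}, use $v^{(i)} = i! \binom{n}{i} x^{n-i}$, collect powers of $x$ using $\sum i y_{k,i} = n$ to get the overall factor $x^{-2n}$, and compare with $\tfrac{d^n}{dx^n}(x^{-n}) = (-1)^n n! \binom{2n-1}{n-1} x^{-2n}$. This simply retraces the proof of the preceding proposition at $m = n$, so the direct substitution is the cleaner route. As a side remark worth recording but not needed in the proof, $\binom{2n-1}{n-1} = \tfrac{1}{2}\binom{2n}{n}$ for $n \geq 1$, which hints at a combinatorial interpretation of the left-hand sum in terms of a central binomial count.
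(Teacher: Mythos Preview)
Your proposal is correct and matches the paper's approach exactly: the corollary is stated without a separate proof, being an immediate specialization of the preceding proposition at $m=n$, and your substitution argument is precisely that. The additional sanity checks and the side remark about $\binom{2n-1}{n-1}=\tfrac12\binom{2n}{n}$ are fine but not needed.
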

\begin{proposition} \label{p 4.3}
For any $n\in\mathbb{N}$ and any $m\in\mathbb{N}^*$, we have that 
\begin{equation*}
\sum_{\sum{iy_{k,i}}=n}{\binom{\sum{y_{k,i}}}{y_{k,1}, \ldots, y_{k,n}}(-1)^{\sum{y_{k,i}}}\prod_{i=1}^{n}{\left[\binom{i+m-1}{m-1}\right]^{y_{k,i}}}}
=(-1)^n \binom{m}{n}. 
\end{equation*}
\end{proposition}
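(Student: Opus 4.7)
The plan is to follow the template established by the previous two propositions: apply Theorem \ref{derivative 1/u} to a carefully chosen function $v(x)$ whose scaled derivatives $v^{(i)}/i!$ produce the binomial coefficient $\binom{i+m-1}{m-1}$, while $1/v$ is elementary enough that $(1/v)^{(n)}$ has a closed form involving $\binom{m}{n}$.

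The natural candidate is $v(x)=(1-x)^{-m}$, motivated by the generating function identity $(1-x)^{-m}=\sum_{k\geq 0}\binom{k+m-1}{m-1}x^{k}$, which hard-codes the desired binomial coefficients into the Taylor expansion. A direct differentiation gives
\[
v^{(i)}(x) = i!\,\binom{m+i-1}{m-1}(1-x)^{-m-i},
\qquad
\left(\frac{1}{v}\right)^{(n)} = (-1)^n n!\binom{m}{n}(1-x)^{m-n}.
\]
I would then substitute these into Theorem \ref{derivative 1/u}. On the right-hand side, all $(1-x)$-factors consolidate into a single power of $(1-x)$ that can be pulled out of the partition sum; the remaining summand is exactly the partition expression in the proposition.

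The only bookkeeping step is verifying that the exponent of $(1-x)$ collapses correctly. Under the constraint $\sum iy_{k,i}=n$, the exponent contributed by $v^{-\sum y_{k,i}-1}$ and by $\prod_i (v^{(i)}/i!)^{y_{k,i}}$ totals
\[
m\Bigl(\textstyle\sum y_{k,i}+1\Bigr)-\sum(m+i)y_{k,i}
= m-\sum iy_{k,i}
= m-n,
\]
so both sides of Theorem \ref{derivative 1/u} carry a common factor $n!\,(1-x)^{m-n}$. Dividing it out produces the claimed identity.

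I do not anticipate any genuine obstacle; the argument is parallel in spirit to the $v(x)=x^{m}$ computation in the preceding proposition, merely with $(1-x)^{-m}$ replacing a polynomial. The only edge case worth flagging is $m<n$: then $\binom{m}{n}=0$, and since $(1-x)^{m}$ is a polynomial of degree $m$, its $n$-th derivative vanishes identically, so the identity degenerates consistently on both sides without requiring special treatment.
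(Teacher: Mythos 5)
Your proposal is correct and follows essentially the same route as the paper: apply Theorem \ref{derivative 1/u} to a negative power of a linear function and compare with the closed form of $(1/v)^{(n)}$. The paper simply takes $v(x)=x^{-m}$ instead of $(1-x)^{-m}$, so the factors $(-1)^i$ in $v^{(i)}$ aggregate to $(-1)^n$ inside the partition sum rather than appearing on the left-hand side; the bookkeeping is otherwise identical.
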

\begin{proof}
Let $v(x)=x^{-m}$, then $v^{(i)}=(-1)^{i}i!\binom{i+m-1}{m-1}x^{-(m+i)}$. Hence, from Theorem \ref{derivative 1/u}, we have 
\begin{equation*}
\begin{split}
\frac{d^n}{dx^n}{\left(\frac{1}{x^{-m}}\right)}
&=n!\sum_{\sum{iy_{k,i}}=n}{\binom{\sum{y_{k,i}}}{y_{k,1}, \ldots, y_{k,n}}\frac{(-1)^{\sum{y_{k,i}}}}{x^{-m\sum{y_{k,i}} - m}}\prod_{i=1}^{n}{\left[(-1)^i\binom{i+m-1}{m-1}x^{-(m+i)}\right]^{y_{k,i}}}} \\
&=n!(-1)^n\sum_{\sum{iy_{k,i}}=n}{\binom{\sum{y_{k,i}}}{y_{k,1}, \ldots, y_{k,n}}\frac{(-1)^{\sum{y_{k,i}}}(x^{-m\sum{y_{k,i}}-n})}{x^{-m\sum{y_{k,i}}-m}}\prod_{i=1}^{n}{\left[\binom{i+m-1}{m-1}\right]^{y_{k,i}}}} \\
&=(-1)^n n!x^{m-n}\sum_{\sum{iy_{k,i}}=n}{\binom{\sum{y_{k,i}}}{y_{k,1}, \ldots, y_{k,n}}(-1)^{\sum{y_{k,i}}}\prod_{i=1}^{n}{\left[\binom{i+m-1}{m-1}\right]^{y_{k,i}}}}.
\end{split}
\end{equation*}
Noticing that 
\begin{equation*}
\frac{d^n}{dx^n}{\left(\frac{1}{x^m}\right)}
=\frac{d^n}{dx^n}{\left({x^{-m}}\right)}
=n!\binom{m}{n}  x^{m-n},
\end{equation*}
we obtain the proposition. 
\end{proof}
An extremely interesting result that can be derived from Proposition \ref{p 4.3} is that for the alternating sum over partitions of multinomial coefficients. 
\begin{corollary}
Setting $m=1$, we get 
\begin{equation*}
\sum_{\sum{iy_{k,i}}=n}{\binom{\sum{y_{k,i}}}{y_{k,1}, \ldots, y_{k,n}}(-1)^{\sum{y_{k,i}}}}
=(-1)^n \binom{1}{n}
=
\begin{cases}
(-1)^n, & n=0,1,\\
0, & n \geq 2.
\end{cases}
\end{equation*}
\end{corollary}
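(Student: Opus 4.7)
The plan is to simply specialize Proposition \ref{p 4.3} at $m=1$ and chase what happens to the product on the left-hand side and to the binomial coefficient on the right-hand side. This is essentially a one-line substitution, so the write-up will be very short; nothing deep is going on beyond observing that the binomial factors that depend on $i$ trivialize.

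First I would set $m=1$ inside the product $\prod_{i=1}^{n}\bigl[\binom{i+m-1}{m-1}\bigr]^{y_{k,i}}$. Each factor becomes $\binom{i}{0}=1$, so the entire product equals $1$ regardless of the partition $(y_{k,1},\ldots,y_{k,n})$. Consequently, the left-hand side of Proposition \ref{p 4.3} collapses to exactly the sum appearing in the corollary. On the right-hand side, Proposition \ref{p 4.3} gives $(-1)^n\binom{1}{n}$, which matches the claimed expression.

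Next I would justify the case split. Using the standard convention $\binom{1}{n}=\frac{1!}{n!(1-n)!}$, we have $\binom{1}{0}=1$ and $\binom{1}{1}=1$, while $\binom{1}{n}=0$ for every integer $n\geq 2$ since $n>1$. Multiplying by $(-1)^n$ then yields $(-1)^n$ for $n\in\{0,1\}$ and $0$ for $n\geq 2$, matching the piecewise expression in the statement.

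There is really no obstacle here; the only thing to be careful about is the edge case $n=0$, where the partition sum contains only the empty partition $(0,0,\ldots)$ and contributes $\binom{0}{0,\ldots,0}(-1)^0=1$, consistent with $(-1)^0\binom{1}{0}=1$. Once these observations are in place the corollary is immediate.
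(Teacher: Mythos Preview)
Your proposal is correct and follows exactly the paper's approach: the corollary is stated immediately after Proposition~\ref{p 4.3} as a direct specialization at $m=1$, with no additional argument given. Your observation that $\binom{i}{0}=1$ kills the product and your handling of the case split and the $n=0$ edge case are precisely what the one-line substitution in the paper is implicitly relying on.
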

\subsection{Special n-th derivatives} \label{Special n-th derivatives}
Because of the absence of a general quotient rule, there were many $n$-th derivatives for which we could not obtain an explicit expression. In this section, we will use the quotient rule derived to develop an expression for some of these derivatives. \\ 

The first special $n$-th derivative is that of $\log_{x}{a}$ as well as that of the reciprocal of $\ln x$. In 2014, Feng Qi \cite{qi2014} introduced the following expression for the reciprocal of $\ln x$: 
\begin{equation} \label{Feng}
\left(\frac{1}{\ln x}\right)^{(n)}
=\frac{(-1)^n}{x^n}\sum_{i=2}^{n+1}{\frac{a_{n,i}}{(\ln x)^i}},
\end{equation}
where
\begin{equation}
a_{n,2}=(n-1)!
\end{equation}
and, for $n+1\geq i \geq 3$, 
\begin{equation}
a_{n,i}=(i-1)!(n-1)!\sum_{\ell_1=1}^{n-1}{\frac{1}{\ell_1}\sum_{\ell_2=1}^{\ell_1-1}{\frac{1}{\ell_2}\cdots \sum_{\ell_{i-3}=1}^{\ell_{i-4}-1}{\frac{1}{\ell_{i-3}}\sum_{\ell_{i-2}=1}^{\ell_{i-3}-1}{\frac{1}{\ell_{i-2}}}}}}. 
\end{equation}
The expression seems simple, however, the $a_{n,i}$ terms correspond to a kind of multiple harmonic sum. Such sums are very tedious to compute, thus, making Eq.~(\ref{Feng}) a bit tedious to use. In what follows, using the general reciprocal rule, we will derive a simpler expression.  
\begin{proposition}
For any $a\in\mathbb{N^*}$, the $n$-th derivative of $\log_{x}{a}$ is given by 
\begin{equation*}
\left(\log_{x}{a}\right)^{(n)}
=\left(\frac{\ln a}{\ln x}\right)^{(n)}
=(\log_{x}{a})\frac{(-1)^n n!}{x^n}\sum_{\sum{iy_{k,i}}=n}{\frac{({\sum{y_{k,i}}})!}{(\ln x)^{\sum{y_{k,i}}}}\prod_{i=1}^{n}{\frac{1}{i^{y_{k,i}}y_{k,i}!}}}.
\end{equation*}
Using the notation, we can rewrite it as follows: 
\begin{equation*}
\left(\log_{x}{a}\right)^{(n)}
=\left(\frac{\ln a}{\ln x}\right)^{(n)}
=(\log_{x}{a})\frac{(-1)^n n!}{x^n}\sum_{\sum{iy_{k,i}}=n}{c_k\frac{r_k!}{(\ln x)^{r_k}}}
.\end{equation*}
\end{proposition}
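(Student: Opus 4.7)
The plan is to apply the reciprocal rule (Theorem \ref{derivative 1/u}) to $v(x)=\ln x$ and then multiply through by the constant $\ln a$, using that $\log_x a = \ln a / \ln x$ so that $(\log_x a)^{(n)} = (\ln a)\cdot (1/\ln x)^{(n)}$.

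First I would compute the derivatives of $v(x)=\ln x$. A standard induction (or direct differentiation of $1/x$) gives $v^{(i)}(x) = (-1)^{i-1}(i-1)!/x^i$ for $i\geq 1$. Substituting into Theorem \ref{derivative 1/u}, the key quantity to simplify is
\begin{equation*}
\left[\frac{v^{(i)}}{i!}\right]^{y_{k,i}} = \left[\frac{(-1)^{i-1}}{i\, x^i}\right]^{y_{k,i}} = \frac{(-1)^{(i-1)y_{k,i}}}{i^{y_{k,i}}\, x^{i\, y_{k,i}}}.
\end{equation*}
Taking the product over $i$ from $1$ to $n$ and using $\sum i\, y_{k,i}=n$, the $x$-powers combine into $x^{-n}$ and the signs combine into $(-1)^{n-\sum y_{k,i}}$. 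Multiplying by the prefactor $(-1)^{\sum y_{k,i}}/(\ln x)^{\sum y_{k,i}+1}$ coming from the reciprocal rule, the $(-1)^{\sum y_{k,i}}$ factors cancel and leave a global sign of $(-1)^n$.

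Next I would expand the multinomial coefficient as $\binom{\sum y_{k,i}}{y_{k,1},\ldots,y_{k,n}} = (\sum y_{k,i})!/\prod y_{k,i}!$ and absorb the $1/y_{k,i}!$ factors into the product in $i$. This yields
\begin{equation*}
\left(\frac{1}{\ln x}\right)^{(n)} = \frac{(-1)^n n!}{x^n}\sum_{\sum i y_{k,i}=n}\frac{(\sum y_{k,i})!}{(\ln x)^{\sum y_{k,i}+1}}\prod_{i=1}^{n}\frac{1}{i^{y_{k,i}}\, y_{k,i}!}.
\end{equation*}
Finally I would factor one power of $1/\ln x$ out of the sum and multiply both sides by $\ln a$, recognizing $\ln a/\ln x = \log_x a$ to obtain the stated formula. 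The notation version then follows immediately from the definitions of $c_k$ and $r_k$.

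There is no real obstacle here: the derivation is a direct plug-in to Theorem \ref{derivative 1/u}. The only place requiring care is the sign bookkeeping, where one must track that $\sum (i-1)y_{k,i} = n - \sum y_{k,i}$ so that the partition-dependent signs $(-1)^{\sum y_{k,i}}$ cancel, leaving the clean global factor $(-1)^n$. Similarly one must verify that the $x$-exponents from $\prod x^{-i y_{k,i}}$ telescope exactly to $x^{-n}$ by virtue of the partition constraint $\sum i y_{k,i}=n$.
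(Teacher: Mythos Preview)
Your proposal is correct and follows essentially the same route as the paper: apply the reciprocal rule with $v=\ln x$, use $(\ln x)^{(i)}=(-1)^{i-1}(i-1)!/x^i$, combine the signs via $\sum (i-1)y_{k,i}=n-\sum y_{k,i}$ and the $x$-powers via $\sum i y_{k,i}=n$, and then rewrite the multinomial coefficient in factorial form before factoring out $1/\ln x$ and multiplying by $\ln a$. The only cosmetic difference is that the paper keeps the constant $\ln a$ in front from the start rather than multiplying it in at the end.
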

\begin{proof}
From Theorem \ref{derivative 1/u} with $v=\ln x$, we have
\begin{equation*}
\begin{split}
\left(\log_{x}{a}\right)^{(n)}
=\left(\frac{\ln a}{\ln x}\right)^{(n)}
&=n!\frac{\ln a}{\ln x}\sum_{\sum{iy_{k,i}}=n}{\binom{\sum{y_{k,i}}}{y_{k,1}, \ldots, y_{k,n}}\frac{(-1)^{\sum{y_{k,i}}}}{(\ln x)^{\sum{y_{k,i}}}}\prod_{i=1}^{n}{\left[\frac{(\ln x)^{(i)}}{i!}\right]^{y_{k,i}}}}.
\end{split}
\end{equation*}
Knowing that, for $i \geq 1$,  
\begin{equation*}
(\ln x)^{(i)}=\frac{(-1)^{i-1} (i-1)!}{x^i},
\end{equation*}
hence, by substituting back and simplifying, we get 
\begin{equation*}
\begin{split}
\left(\log_{x}{a}\right)^{(n)}
&=n!(\log_{x}{a})\sum_{\sum{iy_{k,i}}=n}{\binom{\sum{y_{k,i}}}{y_{k,1}, \ldots, y_{k,n}}\frac{(-1)^{\sum{y_{k,i}}}}{(\ln x)^{\sum{y_{k,i}}}}\prod_{i=1}^{n}{\left[\frac{(-1)^{i-1}}{i x^i}\right]^{y_{k,i}}}} \\
&=\frac{(-1)^{n} n!}{x^n}(\log_{x}{a})\sum_{\sum{iy_{k,i}}=n}{\binom{\sum{y_{k,i}}}{y_{k,1}, \ldots, y_{k,n}}\frac{1}{(\ln x)^{\sum{y_{k,i}}}}\prod_{i=1}^{n}{\left[\frac{1}{i}\right]^{y_{k,i}}}}.
\end{split}
\end{equation*}
Replacing the multinomial coefficient by its factorial definition, we obtain this proposition. 
\end{proof}
Another special $n$-th derivative is that of $\ln{v(x)}$.
\begin{proposition}
The $n$-th derivative of $\ln v(x)$ is given by 
\begin{equation*}
\begin{split}
(\ln v)^{(n)}
&=n!\sum_{\sum{iy_{k,i}}=n}{\binom{\sum{y_{k,i}}}{y_{k,1}, \ldots, y_{k,n}}\frac{(-1)^{\sum{y_{k,i}}-1}}{(\sum{y_{k,i}})!v^{\sum{y_{k,i}}}}\prod_{i=1}^{n}{\left[\frac{v^{(i)}}{i!}\right]^{y_{k,i}}}}  \\
&=n!\sum_{\sum{iy_{k,i}}=n}{\frac{(\sum{y_{k,i}}-1)!(-1)^{\sum{y_{k,i}}-1}}{v^{\sum{y_{k,i}}}}\prod_{i=1}^{n}{\frac{1}{y_{k,i}!}\left[\frac{v^{(i)}}{i!}\right]^{y_{k,i}}}}
.\end{split}
\end{equation*}
\end{proposition}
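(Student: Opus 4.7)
The plan is to reduce this to the quotient rule by way of the elementary identity $(\ln v)' = v'/v$, which gives
\[
(\ln v)^{(n)} = \left(\frac{v'}{v}\right)^{(n-1)}.
\]
I then apply Theorem \ref{derivative u/v} with $u$ replaced by $v'$ and with differentiation order $n-1$. The outer sum runs over $\ell = 0, \ldots, n-1$, the inner sum runs over partitions $(y_{k,1}, \ldots, y_{k,\ell})$ of $\ell$, and the attached factor is $(v')^{(n-1-\ell)}/(n-1-\ell)! = v^{(n-\ell)}/(n-1-\ell)!$.

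Next, I re-index with $m = n-\ell \in \{1,\ldots,n\}$ and absorb the detached $v^{(m)}$ into the partition, using the same bookkeeping trick employed in the proof of Theorem \ref{derivative 1/u}. For each partition $(y_{k,1},\ldots,y_{k,n-m})$ of $n-m$, define $(Y_{k,1},\ldots,Y_{k,n})$ by $Y_{k,m} = y_{k,m}+1$ and $Y_{k,i} = y_{k,i}$ for $i \neq m$. Then $\sum i Y_{k,i} = n$, $\sum Y_{k,i} = \sum y_{k,i} + 1$, and
\[
v^{(m)} \prod_{i=1}^{n-m}\left[\frac{v^{(i)}}{i!}\right]^{y_{k,i}} = m! \prod_{i=1}^{n}\left[\frac{v^{(i)}}{i!}\right]^{Y_{k,i}}.
\]
After swapping the order of summation, the outer sum becomes a sum over partitions of $n$, and the inner sum reduces to
\[
\sum_{m:\,Y_{k,m}\geq 1} m \binom{\sum Y_{k,i}-1}{Y_{k,1},\ldots,Y_{k,m}-1,\ldots,Y_{k,n}}.
\]

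The main obstacle is that this inner sum is a $m$-weighted variant of Lemma \ref{l1}, rather than Lemma \ref{l1} itself. To evaluate it, I use the elementary identity
\[
\binom{r-1}{Y_{k,1},\ldots,Y_{k,m}-1,\ldots,Y_{k,n}} = \frac{Y_{k,m}}{r}\binom{r}{Y_{k,1},\ldots,Y_{k,n}},
\]
where $r = \sum Y_{k,i}$, together with the defining relation $\sum_{m} m Y_{k,m} = n$ for a partition of $n$. The weighted inner sum then collapses to $(n/r)\binom{r}{Y_{k,1},\ldots,Y_{k,n}}$.

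Substituting this back, the prefactor becomes $(n-1)!\cdot n/r = n!/r$, and combined with $\binom{r}{Y_{k,1},\ldots,Y_{k,n}}/r = (r-1)!/(Y_{k,1}!\cdots Y_{k,n}!)$, this yields the second form of the proposition directly. The first form then follows by regrouping the multinomial coefficient and factorials. The only delicate points in executing this plan are the factorial bookkeeping during the re-indexing and the careful justification that the partitions of $n$ appearing in the new summation are indexed correctly (every partition of $n$ arises, and for each, the admissible values of $m$ are exactly those with $Y_{k,m} \geq 1$).
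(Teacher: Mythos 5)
Your route is the same as the paper's: reduce to $\left(v'/v\right)^{(n-1)}$ via Theorem \ref{derivative u/v}, absorb the detached $v^{(m)}$ into the partition exactly as in the proof of Theorem \ref{derivative 1/u}, and evaluate the resulting $m$-weighted multinomial sum using $\sum_m mY_{k,m}=n$. Your evaluation of that weighted sum is correct, and in fact more careful than the paper's own: with $r=\sum Y_{k,i}$ you get
\[
\sum_{m}{m\binom{r-1}{Y_{k,1},\ldots,Y_{k,m}-1,\ldots,Y_{k,n}}}
=\frac{n}{r}\binom{r}{Y_{k,1},\ldots,Y_{k,n}}
=n\,\frac{(r-1)!}{Y_{k,1}!\cdots Y_{k,n}!},
\]
which, combined with the prefactor $(n-1)!$, yields exactly the second displayed form of the proposition.

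The one step you cannot complete as announced is the last sentence: the first displayed form does \emph{not} ``follow by regrouping.'' Its coefficient is $\binom{r}{y_{k,1},\ldots,y_{k,n}}/r!=1/(y_{k,1}!\cdots y_{k,n}!)$, which differs from your (correct) coefficient $(r-1)!/(y_{k,1}!\cdots y_{k,n}!)$ by the factor $(r-1)!$; the two forms agree only when $r\le 2$. A check at $n=3$, where $(\ln v)'''=v'''/v-3v'v''/v^2+2(v')^3/v^3$ and the partition $1+1+1$ has $r=3$, confirms that the second form is the correct one and that the first form (and the penultimate line of the paper's own proof, which in effect replaces $n/r$ by $n/r!$) contains a slip. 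So keep your derivation as is, but state only the second form, or repair the first by replacing $(\sum y_{k,i})!$ with $\sum y_{k,i}$ in its denominator.
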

\begin{proof}
From Theorem \ref{derivative u/v}, we have 
\begin{equation*}
\begin{split}
(\ln v)^{(n)}
&=\left(\frac{v'}{v}\right)^{(n-1)} \\
&=(n-1)!\sum_{\ell=0}^{n-1}{\frac{(v')^{(\ell)}}{\ell!}\sum_{\sum{iy_{k,i}}=n-\ell-1}{\binom{\sum{y_{k,i}}}{y_{k,1}, \ldots, y_{k,n-\ell-1}}\frac{(-1)^{\sum{y_{k,i}}}}{v^{\sum{y_{k,i}}+1}}\prod_{i=1}^{n-\ell-1}{\left[\frac{v^{(i)}}{i!}\right]^{y_{k,i}}}}} \\
&=(n-1)!\sum_{\ell=0}^{n-1}{\frac{v^{(\ell+1)}(\ell+1)}{(\ell+1)!}\sum_{\sum{iy_{k,i}}=n-\ell-1}{\binom{\sum{y_{k,i}}}{y_{k,1}, \ldots, y_{k,n-\ell-1}}\frac{(-1)^{\sum{y_{k,i}}}}{v^{\sum{y_{k,i}}+1}}\prod_{i=1}^{n-\ell-1}{\left[\frac{v^{(i)}}{i!}\right]^{y_{k,i}}}}} \\
&=(n-1)!\sum_{\ell=1}^{n}{\frac{v^{(\ell)}}{\ell!}\ell\sum_{\sum{iy_{k,i}}=n-\ell}{\binom{\sum{y_{k,i}}}{y_{k,1}, \ldots, y_{k,n-\ell}}\frac{(-1)^{\sum{y_{k,i}}}}{v^{\sum{y_{k,i}}+1}}\prod_{i=1}^{n-\ell}{\left[\frac{v^{(i)}}{i!}\right]^{y_{k,i}}}}}
.\end{split}
\end{equation*}
Similar to what was done in the proof of Theorem \ref{derivative 1/u}, we defined an extension $(y_{k,1}, \cdots, y_{k,n})$ of each partition $(y_{k,1}, \cdots, y_{k,n-\ell})$ such that $y_{k,n-\ell+1}=\cdots=y_{k,n}=0$. Now, for every $\ell\in[1,n]$, let us associate with each partition $(y_{k,1}, \ldots, y_{k,n})$, the partition $(Y_{k,1}, \ldots, Y_{k,n})$ such that 
\begin{equation*}
\begin{cases}
Y_{k,i}=y_{k,i}+1, & \text{for } i=\ell, \\
Y_{k,i}=y_{k,i}, & \text{otherwise}. 
\end{cases}
\end{equation*}
Notice that $\sum{Y_{k,i}}=\sum{y_{k,i}}+1$ and that $\sum{iY_{k,i}}=n$. Hence, we can write 
\begin{equation*}
\begin{split}
(\ln v)^{(n)}
&=(n-1)!\sum_{\ell=1}^{n}{\ell\sum_{\sum{iY_{k,i}}=n}{\binom{\sum{Y_{k,i}-1}}{Y_{k,1}, \ldots, Y_{k,\ell}-1, \ldots, Y_{k,n}}\frac{(-1)^{\sum{Y_{k,i}}-1}}{v^{\sum{Y_{k,i}}}}\prod_{i=1}^{n}{\left[\frac{v^{(i)}}{i!}\right]^{Y_{k,i}}}}} \\
&=(n-1)!\sum_{\sum{iY_{k,i}}=n}{\frac{(-1)^{\sum{Y_{k,i}}-1}}{v^{\sum{Y_{k,i}}}}\prod_{i=1}^{n}{\left[\frac{v^{(i)}}{i!}\right]^{Y_{k,i}}}}\sum_{\ell=1}^{n}{\ell\binom{\sum{Y_{k,i}-1}}{Y_{k,1}, \ldots, Y_{k,\ell}-1, \ldots, Y_{k,n}}} \\
&=(n-1)!\sum_{\sum{iY_{k,i}}=n}{\binom{\sum{Y_{k,i}}}{Y_{k,1}, \ldots, Y_{k,n}}\frac{(-1)^{\sum{Y_{k,i}}-1}}{(\sum{Y_{k,i}})!v^{\sum{Y_{k,i}}}}\prod_{i=1}^{n}{\left[\frac{v^{(i)}}{i!}\right]^{Y_{k,i}}}}\sum_{\ell=1}^{n}{\ell Y_{k,\ell}} \\
&=n!\sum_{\sum{iY_{k,i}}=n}{\binom{\sum{Y_{k,i}}}{Y_{k,1}, \ldots, Y_{k,n}}\frac{(-1)^{\sum{Y_{k,i}}-1}}{(\sum{Y_{k,i}})!v^{\sum{Y_{k,i}}}}\prod_{i=1}^{n}{\left[\frac{v^{(i)}}{i!}\right]^{Y_{k,i}}}} 
.\end{split}
\end{equation*}
\end{proof}
\bibliographystyle{apa}
\bibliography{DerivativeOfAQuotient}

\end{document}